\definecolor{citegreen}{rgb}{0,0.5,0.15}
\definecolor{linkblue}{rgb}{0,0.4,1}
\definecolor{citebordergreen}{rgb}{0,0.9,0.37}
\title{The anabelian restricted Burnside problem}
\author{Andreas Thom}
\address{Andreas Thom, TU Dresden, 01062 Dresden, Germany}
\email{andreas.thom@tu-dresden.de}
\theoremstyle{plain}
\newtheorem{theorem}{Theorem}[section]
\newtheorem{proposition}[theorem]{Proposition}
\newtheorem{lemma}[theorem]{Lemma}
\newtheorem{corollary}[theorem]{Corollary}
\theoremstyle{definition}
\newcommand{\beq}{\begin{equation}}
\newcommand{\eeq}{\end{equation}}
\newcommand{\beqn}{\begin{equation*}}
\newcommand{\eeqn}{\end{equation*}}
\newcommand{\brq}{\begin{dmath}[compact]}
\newcommand{\erq}{\end{dmath}}
\newcommand{\brqn}{\begin{dmath*}[compact]}
\newcommand{\erqn}{\end{dmath*}}
\newcommand{\bag}{\begin{align}}
\newcommand{\eag}{\end{align}}
\newcommand{\bagn}{\begin{align*}}
\newcommand{\eagn}{\end{align*}}
\newcommand{\vertiii}[1]{{|\kern-0.2ex|\kern-0.2ex| #1 
    |\kern-0.2ex|\kern-0.2ex|}}
\begin{document}
\begin{abstract}
Let $n,d \in \mathbb N$ and $w \in \mathbb F_n$ be non-trivial. We prove that the relatively free group of rank $d$ in the variety defined by the group law $w$ has a largest anabelian finite quotient and estimate its size. Here, a finite group is called anabelian if it has only non-abelian composition factors. The estimate is based on explicit bounds for the length of laws for finite simple groups obtained by Bradford and the author and on recent work by Fumagalli--Leinen--Puglisi.
\end{abstract}

\maketitle

\tableofcontents

\section{Introduction}

Let $n \in \mathbb N$ and $w \in \mathbb F_n$ be a non-trivial word. For $d \in \mathbb N$, we denote the relatively free group of rank $d$ in the variety defined by a group law $w$ by $B(d,w)$.

In the case $w=x^{k}$ one writes $B(d,k):=B(d,x^k)$ for the free Burnside group of rank $d$ and exponent $k$, see \cite{Burnside1902} for the original reference. For small exponents, finiteness is classical: $B(d,2)$ is elementary abelian; $B(d,3)$ is finite with $|B(d,3)|=3^{d+\binom d2+\binom d3}$ and nilpotent of class at most $3$ \cite{LeviVdW1933}; $B(d,4)$ and $B(d,6)$ are finite, see \cites{Hall1958, Sanov1940}. The landscape changes for larger $k$. Novikov--Adyan \cite{NovikovAdyan1968, Adian1979} proved that for all sufficiently large odd exponents $k\ge 665$ one has $B(d,k)$ infinite for every $d\ge2$. For even exponents, later work established infiniteness for all sufficiently large even $k$; early breakthroughs were due to Ivanov \cite{Ivanov1994}, and Lys\"enok \cite{Lysenok1996} obtained general results covering large even exponents. For small exponents outside $\{2,3,4,6\}$ the situation is still largely open.

The restricted Burnside problem asks: for given $d,k$, is there a largest finite $d$-generated group of exponent $k$? Equivalently, is the restricted Burnside group $R(d,k) := B(d,k)/K_{d,k} $ finite, where $K_{d,k}$ is the intersection of all subgroups of finite index of $B(d,k)$? This was solved affirmatively in a series of works combining group theory with Lie methods. Kostrikin \cite{Kostrikin1959} proved finiteness for prime exponents. Hall--Higman \cite{HallHigman1956} developed a reduction showing that it suffices to treat prime powers. Zel'manov \cites{Zelmanov1991odd, Zelmanov1992two} completed the solution for all $k$ by establishing powerful theorems on Lie algebras with polynomial identities; in particular, $R(d,k)$ is finite for every $d,k$. While this guarantees the existence of a largest finite quotient of $B(d,k)$, effective general bounds on $|R(d,k)|$ are enormous; sharper estimates are known in several small-prime cases via $p$-group and Lie-theoretic techniques. 

\vspace{0.2cm}

If the total exponent of $w$ in the variables is zero, then $\mathbb Z^d$ is naturally a quotient of $B(d,w)$. Similar phenomena arise for words deeper in the central series of the free group, when the free nilpotent group arises as a natural infinite residually finite quotient. De Cornulier--Mann \cite{MR2395788} proved among other things that for $n=2, w=[x,y]^{30}$, the group $R(d,w)$ is not virtually solvable, where we define $R(d,w)$ in a natural way as the largest residually finite quotient of $B(d,w).$ In particular, in many cases there is no bound on the size of finite quotients of $B(d,w)$.
In this case, i.e.\ $w \in [\mathbb F_n,\mathbb F_n]$, the group $B(d,w)$ has been studied in detail for various interesting examples, e.g.\ \cite{AdianAtabekyan2018, Golod1969, Juhasz1990}.

If $w \not\in [\mathbb F_n,\mathbb F_n]$, it seems much harder to understand when $B(d,w)$, which is then a quotient of a classical Burnside group, is infinite, and we are not aware of any positive results apart from the classical case. In any case, $R(d,w)$ is always finite under this assumption as a consequence of Zel'manov's theorem.

\vspace{0.2cm}

The main result of this paper is a general bound on the size of finite quotients, once one restricts the class in a natural way.
Recall, that every finite group $G$ admits a composition series
$$\{1\} =G_0 \lhd G_1 \lhd \cdots \lhd G_k = G $$ for some $k \in \mathbb N$ such that the subquotients $G_i/G_{i-1}$ are simple for all $1 \leq i \leq k.$ These factor groups are called composition factors. The classical Jordan-H\"older theorem says that the multiset of isomorphism classes of composition factors does not depend on the composition series. Following Nikolov and Segal, we say that a finite group is \emph{anabelian} if all composition factors are non-abelian. As a consequence of rather deep results, anabelian groups behave in various ways like finite simple groups, see for example \cite{MR3557468}.

In order to state our main result, let us denote by ${\rm E}_{n}$ the $n$-th iterated exponential function, i.e. ${\rm E}_0(x)=x$ and ${\rm E}_{n+1}(x) = \exp({\rm E}_n(x)).$

\begin{theorem} \label{thm:main}
Let $d,n \in \mathbb N$ and $w \in \mathbb F_n \setminus \{1\}$ of length $\ell$.
Any finite anabelian quotient of $B(d,w)$ is of size 
at most ${\rm E}_{2\ell}(O(d \ell \ln(\ell)))$.
\end{theorem}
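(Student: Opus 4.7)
The plan is to separate the problem into bounding the orders of the possible composition factors and bounding the composition length of a finite anabelian quotient $G$ of $B(d,w)$.

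Fix such a $G$. Because $G$ lies in the variety defined by the law $w$, every subquotient of $G$---and in particular every composition factor---also satisfies $w \equiv 1$. So each composition factor $S$ of $G$ is a non-abelian finite simple group in which $w$ is a law. The first ingredient is the Bradford--Thom bound on the minimal length of a non-trivial law in a non-abelian finite simple group: inverting it yields an explicit function $F(\ell)$, growing like an iterated exponential of height of order $\ell$ in the input, such that $|S| \leq F(\ell)$ for any non-abelian finite simple $S$ satisfying a law of length $\ell$. This step is responsible for most of the tower of exponentials in the final bound.

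The second ingredient is the Fumagalli--Leinen--Puglisi theorem, which bounds the composition length of a $d$-generated finite anabelian group in terms of the orders of its composition factors. Combined with the uniform bound $|S| \leq F(\ell)$ from the previous step, this gives an upper bound on $|G|$ as an explicit function of $d$, $\ell$, and $F(\ell)$. In particular, one uses the classical fact that if $S$ is non-abelian finite simple, then the multiplicity with which $S$ occurs as a chief factor of a $d$-generated anabelian group is bounded by a polynomial in $d$ and $|S|$, so that each chief section contributes only a controlled amount to the total order.

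Finally one substitutes the bound on $F(\ell)$ into the composition-length estimate to obtain a bound of the form claimed in the theorem. The main obstacle is the bookkeeping of iterated exponentials: one must check that the Fumagalli--Leinen--Puglisi step contributes only a bounded number of additional exponentials on top of Bradford--Thom, so that the overall height remains $2\ell$, and that the dependence on $d$, $\ell$, and $\ln(\ell)$ can be absorbed into the innermost $O(d \ell \ln(\ell))$ argument without climbing the tower further.
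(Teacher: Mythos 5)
Your account misattributes where the tower of exponentials comes from, and this is not just a slip of the pen but reflects a different (and, as written, incorrect) picture of the argument.

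Inverting the Bradford--Thom lower bounds does \emph{not} give a function $F(\ell)$ that is itself a tower of height $O(\ell)$. For a finite simple group $S$ of Lie type $X_k(q)$ satisfying a law of length $\ell$, one gets $q^{\Omega(a(X))}\leq O(\ell)$, hence $k=O(\ln\ell)$ and $|S|\leq q^{O(k^2)}$ is at most quasi-polynomial in $\ell$; for ${\rm Alt}_k$ one gets $k=O(\ell)$ and $|S|\leq \exp(O(\ell\ln\ell))$. So the uniform bound on a \emph{single} composition factor is at worst a single exponential in $\ell\ln\ell$. The tower of height $2\ell$ comes from elsewhere.

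You also misstate what Fumagalli--Leinen--Puglisi prove: they bound the \emph{nonsolvable length} $\lambda(G)$ by $\ell$ (the length of the shortest law), not the composition length, and not in terms of the sizes of composition factors. For an anabelian group, this gives a normal series of at most $\ell$ steps, each of whose factors is a direct product of non-abelian simple groups. But each such layer can have many simple direct factors, so the composition length is not controlled by $\ell$ alone.

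The missing mechanism, which is exactly what produces the tower and which your outline does not mention, is the recursion on this series with generator count tracked via Schreier's theorem. The top layer $G/H$ is a $d$-generated product of non-abelian simple groups of size at most ${\rm E}_2(O(d\ell\ln\ell))$ (where the exponent arises because a $d$-generated $S^n$ can have $n$ up to roughly $|S|^d$ --- exponential, not polynomial, in $d$; your ``polynomial in $d$ and $|S|$'' is wrong here). Schreier then says $H$ needs up to $d\cdot|G/H|$ generators, so the bound for the next layer is fed a hugely inflated generator count, and iterating through $\leq\ell$ layers each costs two more exponentials. This is how ${\rm E}_{2\ell}$ actually arises: $2$ exponentials per nonsolvable layer, times at most $\ell$ layers, with Bradford--Thom contributing only the innermost $O(d\ell\ln\ell)$. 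Without the Schreier/recursion step, you have no way to control the composition length or the sizes of the lower layers, and simply multiplying a per-factor bound $F(\ell)$ by a ``composition length estimate'' cannot produce (or even motivate) a tower of height $2\ell$.
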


Towers of exponentials are not unexpected in questions of this kind. Indeed, Vaughan-Lee and Zelmanov proved bounds of similar form for the classical restricted Burnside problem. One result in this context says that a finite quotient of $B(d,k)$ is of size at most ${\rm E}_{m}(d)$ with $m=k^{k^k}, $ see \cites{MR1421888, MR1717419}.

The proof of our main result relies on the estimates for the length of laws for finite simple groups obtained by Bradford and the author in \cite{BradfordThom2024} and the seminal work of Fumagalli--Leinen--Puglisi \cite{MR4508338}.

\section{Nonsolvable length and anabelian groups} \label{sec:nons}

In order to summarize the results from \cite{MR4508338}, we need another definition: Every finite group $G$ has a normal series each of whose factors is
either a solvable group or a direct product of non-abelian simple groups. The
minimum number of nonsolvable factors, attained on all possible such series, is called the \emph{nonsolvable length} $\lambda(G)$ of the group $G$.
The main result of \cite{MR4508338} is the following theorem:

\begin{theorem}[Fumagalli--Leinen--Puglisi] \label{thm:puglisi}
If a finite group $G$ satisfies a law of length $\ell$, then its nonsolvable length $\lambda(G)$ is bounded by $\ell$.
\end{theorem}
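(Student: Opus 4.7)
The plan is to proceed by induction on the length $\ell$ of $w$, showing that if $G$ has a nontrivial nonsolvable chief section, then the law $w$ forces a shorter law on a smaller quotient, peeling off one layer of nonsolvable structure per unit of word length. The base case $\ell = 1$ is immediate: any length-one word is $x_i^{\pm 1}$, which is only satisfied by the trivial group, so $\lambda(G) = 0 \le 1$.

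For the induction step, suppose the theorem holds for all words of length less than $\ell$, and let $G$ satisfy $w$ of length $\ell \ge 2$. If $G$ is solvable we are done. Otherwise, fix a nonsolvable series
$$1 \trianglelefteq R_0 \trianglelefteq L_1 \trianglelefteq R_1 \trianglelefteq \cdots \trianglelefteq L_\lambda \trianglelefteq R_\lambda = G,$$
realizing $\lambda(G) = \lambda$, where each $R_i/L_i$ is solvable and each $L_{i+1}/R_i$ is a nontrivial direct product of nonabelian finite simple groups. The strategy is to construct a nontrivial law $w'$ of length at most $\ell - 1$ that is satisfied by $G/L_1$; since $\lambda(G/L_1) = \lambda - 1$, the inductive hypothesis then yields $\lambda - 1 \le \ell - 1$, which is the desired bound.

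To produce such a $w'$, I would perform the substitution $x_i \mapsto x_i y_i$ with $y_i \in L_1$ in the identity $w(x_1, \ldots, x_n) = 1$ and expand via commutator calculus, or, more systematically, via the Fox derivatives $\partial w / \partial x_i$ in the integral group ring. The resulting identity schematically takes the form
$$w(x_1, \ldots, x_n) \cdot \prod_i \left(\text{word of length} \le \ell - 1 \text{ in the } x_j, y_j\right) \equiv 1 \pmod{\text{higher-order corrections}};$$
after reducing modulo $L_1$ and using that the chief factor $L_1/R_0$ is a nontrivial direct product of nonabelian simple groups, the surviving commutator terms should yield the sought shorter law on $G/L_1$.

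The principal obstacle is ensuring that the derived word $w'$ is \emph{nontrivial} as an element of the free group and has length bounded by exactly $\ell - 1$. A careless substitution easily destroys both properties, since generic choices of $y_i$ give a word whose free reduction is again $w$. The fix should use the specific structure of $L_1/R_0$: nonabelian finite simple groups carry very strong non-commutation witnesses (for instance by Ore--Thompson-type commutator generation), which one exploits to choose the substitution so that a designated Fox-derivative summand of $w$ survives nontrivially and delivers the desired shorter law. This is exactly the step where the rigidity of nonsolvable composition factors couples with the linear-algebraic structure of word derivatives to give the tight bound $\lambda(G) \le \ell$.
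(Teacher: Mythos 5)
The paper does not prove this theorem; it cites it verbatim as the main result of Fumagalli--Leinen--Puglisi [MR4508338] and uses it as a black box. So there is no ``paper's own proof'' to compare against. Judged on its own merits, however, your sketch has a genuine gap at the load-bearing step.

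Your outer inductive frame is logically sound: nonsolvable length is subadditive on extensions, so $\lambda(G) \le \lambda(L_1) + \lambda(G/L_1) \le 1 + \lambda(G/L_1)$, and it would suffice to show that $G/L_1$ satisfies a nontrivial law of length at most $\ell-1$. But that claim is never established, and the mechanism you propose cannot establish it. Writing $w(x_1y_1,\dots,x_ny_n) = w(x_1,\dots,x_n)\,c(x,y)$ in the free group and substituting $x_i \in G$, $y_i \in L_1$ gives a relation $c(x,y)=1$ that holds on the pair $(G,L_1)$ --- a ``mixed'' law whose variables range over two different sets --- not a law on $G/L_1$. Reducing modulo $L_1$ annihilates every $y_i$ and collapses $c$ to the trivial word, recovering only the original law $w$ on $G/L_1$, of length $\ell$. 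Moreover the unreduced correction word $c$ generically has length $\ge \ell$, not $\le \ell-1$, so the length bound points the wrong way. The Fox-derivative picture lives in the integral group ring and does not by itself return a group word of controlled length. You yourself flag this gap (``the principal obstacle is ensuring that the derived word $w'$ is nontrivial \dots and has length bounded by exactly $\ell-1$'') and gesture at Ore--Thompson-type commutator generation as a remedy, but that is precisely the step that would constitute the proof, and as stated the shortening claim is not true in general: there is no reason a proper quotient of $G$ should satisfy a strictly shorter law than $G$ does.

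For orientation: the actual Fumagalli--Leinen--Puglisi argument does not proceed by peeling one nonsolvable layer per unit of word length via a word-shortening trick. It reduces to the monolithic case (unique minimal normal subgroup, a direct power of a nonabelian simple group) and analyses how nontrivial evaluations of the word $w$ must propagate through the layers of a nonsolvable series, invoking CFSG-dependent structural facts about finite simple groups. If you want to pursue your own inductive route, the claim you need is not ``$G/L_1$ satisfies a shorter law'' but rather a quantitative statement of the form ``if $G$ is monolithic with nonabelian socle and satisfies $w$, then $G$ modulo its socle satisfies a law whose length is strictly smaller'' --- and proving that is essentially the hard content of the theorem, not a corollary of commutator calculus.
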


While solvable factors do not come up in our analysis, the result is already highly non-trivial for anabelian groups. The authors of \cite{MR4508338} mention a conjecture attributed to  Larsen, saying that $\lambda(G)$ could be bounded by $O(\ln(\ell))$. A positive result in this direction would have immediate consequences and imply near optimal bounds in Theorem \ref{thm:main}.

\vspace{0.2cm}

Let us collect some basic properties of the class of anabelian groups.

\begin{lemma} \label{lem:anab}
The class of anabelian groups is closed under extensions, quotients and subdirect products.
\end{lemma}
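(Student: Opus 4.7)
The plan is to reduce everything to the following fundamental observation, which is an immediate consequence of the Jordan--H\"older refinement theorem: for any finite group $G$ and any normal subgroup $N\lhd G$, the multiset of composition factors of $G$ is the disjoint union of those of $N$ and those of $G/N$. This can be seen by taking any composition series of $N$, pulling back a composition series of $G/N$ via the projection $G\to G/N$, and concatenating the two to obtain a composition series of $G$ whose factors are precisely the combined collection.

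From this observation, closure under quotients and under extensions is immediate. If $G$ is anabelian and $N\lhd G$, then the composition factors of $G/N$ form a sub-multiset of those of $G$, hence are all non-abelian. Conversely, if both $N$ and $G/N$ are anabelian, the disjoint union of their composition factors gives those of $G$, and all are non-abelian. The same observation also shows that normal subgroups of anabelian groups are anabelian, a fact I will use in the next step.

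For subdirect products I would proceed by induction on the number of factors. Let $G\leq H_1\times\cdots\times H_k$ with each projection $\pi_i\colon G\to H_i$ surjective and each $H_i$ anabelian; the case $k=1$ is trivial. For $k\geq 2$, set $N:=\ker\pi_k$, so $G/N\cong H_k$ is anabelian. Since $N\lhd G$ and $\pi_i\colon G\to H_i$ is surjective for each $i<k$, the image $\pi_i(N)$ is \emph{normal} in $H_i$, and therefore anabelian by the previous paragraph. Now $N$ sits inside $\prod_{i<k}\pi_i(N)$ as a subdirect product of anabelian groups, so by the inductive hypothesis $N$ is anabelian. Closure under extensions then yields that $G$ is anabelian.

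There is no deep obstacle in this lemma; it is essentially bookkeeping built on the Jordan--H\"older refinement. The only mildly delicate point is the reduction step for subdirect products, where one must observe that the surjectivity of the coordinate projections forces $\pi_i(N)$ to be normal in $H_i$, which is exactly what makes the induction collapse onto the extension case.
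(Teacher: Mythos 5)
Your proof is correct, and for subdirect products it takes a genuinely different and more careful route than the paper. The paper's own argument is a two-sentence sketch: project a composition series of the subdirect product $G\leq H_1\times\cdots\times H_k$ via each coordinate map $\pi_i$ to get a normal series of $H_i$ with simple-or-trivial factors, and conclude that an abelian composition factor of $G$ would force one in some $H_i$. As stated that inference runs the wrong way: the projection argument shows that the composition factors of each $H_i$ form a sub-multiset of those of $G$, not conversely, and a fixed composition step of $G$ can collapse under every $\pi_i$ (already for $G=(\mathbb Z/2)^2$ inside itself the top step does so). Your inductive argument supplies exactly what is needed to make this rigorous: peel off $N=\ker\pi_k$, note that $\pi_i(N)\lhd H_i$ precisely because $\pi_i$ is surjective onto $H_i$, invoke closure under normal subgroups (your Jordan--H\"older observation) to see each $\pi_i(N)$ is anabelian, apply the inductive hypothesis to $N$ as a subdirect product of the $\pi_i(N)$, and finish by closure under extensions. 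For $k=2$ this is essentially Goursat's lemma. You also spell out the (easy) quotient and extension cases, which the paper leaves implicit. Net effect: same statement, but your proof is the more self-contained and complete one.
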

\begin{proof} For a subdirect product, note that a composition series maps onto a normal series of each of the factors, with subquotients either simple or trivial. Hence, if there is a abelian composition factor in the subdirect product, not all  of the factors in the product can be anabelian. \end{proof}

\begin{corollary}
Let $\Gamma$ be a countable group. There exists a unique quotient $ \pi \colon \Gamma \to \Gamma_{\rm an}$ such that a surjection $\varphi \colon \Gamma \to H$ onto a finite group factors through $\pi$ if and only if $H$ is anabelian and such homomorphism separate elements of $\Gamma$.
\end{corollary}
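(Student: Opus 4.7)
The plan is to take $\mathcal N := \{M \trianglelefteqslant \Gamma : [\Gamma:M] < \infty,\ \Gamma/M \text{ anabelian}\}$, set $N := \bigcap_{M \in \mathcal N} M$, and define $\pi : \Gamma \twoheadrightarrow \Gamma/N =: \Gamma_{\rm an}$. The key structural fact is that $\mathcal N$ is closed under finite intersections: by Lemma~\ref{lem:anab}, for $M_1, M_2 \in \mathcal N$ the quotient $\Gamma/(M_1 \cap M_2)$ sits as a subdirect product inside $\Gamma/M_1 \times \Gamma/M_2$, and is therefore anabelian.

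With this construction, the forward direction of the iff and the separation statement are immediate. If $\varphi : \Gamma \twoheadrightarrow H$ surjects onto a finite anabelian group $H$, then $\ker \varphi \in \mathcal N$ contains $N$, so $\varphi$ factors through $\pi$. For $g \in \Gamma \setminus N$, by definition some $M \in \mathcal N$ does not contain $g$, so the finite anabelian quotient $\Gamma \twoheadrightarrow \Gamma/M$ separates $\pi(g)$ from the identity in $\Gamma_{\rm an}$.

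Uniqueness of $\pi$ follows from these two properties. Given another quotient $\pi' : \Gamma \twoheadrightarrow \Gamma'$ satisfying both, $N \subseteq \ker \pi'$ because for $g \in N$ every finite anabelian quotient of $\Gamma'$ pulled back via $\pi'$ annihilates $g$, and separation for $\pi'$ then forces $\pi'(g) = 1$. The reverse inclusion $\ker \pi' \subseteq N$ holds because the factorization property for $\pi'$ implies every finite anabelian quotient of $\Gamma$ factors through $\pi'$ and hence kills $\ker \pi'$.

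The subtlest point is the remaining direction of the iff, namely that a surjection $\varphi : \Gamma \twoheadrightarrow H$ onto a finite group factoring through $\pi$ forces $H$ to be anabelian. I would attack this by passing to the pro-anabelian completion $\widehat{\Gamma}_{\mathcal N} := \varprojlim_{M \in \mathcal N} \Gamma/M$, into which $\Gamma_{\rm an}$ embeds densely. The closure of $\ker \varphi / N$ in this profinite completion is a closed finite-index, hence open, subgroup, and using closure of $\mathcal N$ under finite intersections it contains the kernel of the projection to some single $\Gamma/M_0$ with $M_0 \in \mathcal N$. Once one shows that this closure coincides with $\ker \varphi / N$ itself, the inclusion $M_0 \subseteq \ker \varphi$ exhibits $H$ as a quotient of the anabelian group $\Gamma/M_0$, hence anabelian by Lemma~\ref{lem:anab}. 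Establishing this last coincidence is the main technical hurdle.
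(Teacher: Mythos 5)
Your construction, the ``if'' direction, and the separation property all match the paper exactly; the paper's own proof is even terser, stopping after defining $\Lambda_0 = \bigcap_{\Lambda \in \mathcal P} \Lambda$ and $\Gamma_{\rm an} = \Gamma/\Lambda_0$, and does not verify any of the claimed properties in print. Your uniqueness argument via the two kernel inclusions is also correct and is a reasonable addition.

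The ``only if'' direction that you flag as the ``main technical hurdle'' is in fact an obstruction, not a hurdle: as stated, that implication is false for general countable $\Gamma$, and your profinite approach cannot succeed because the closure of $\ker\varphi/N$ genuinely need not coincide with $\ker\varphi/N$. Take $\Gamma = \mathbb F_2$. Free groups are residually ${\rm Alt}_n$, hence residually finite-anabelian, so $N = \{1\}$, $\Gamma_{\rm an} = \mathbb F_2$, and $\pi = \mathrm{id}$. Every finite quotient of $\mathbb F_2$ then factors through $\pi$, including the abelianization map $\mathbb F_2 \twoheadrightarrow \mathbb Z/2$, whose target is not anabelian. The corollary should really be read as asserting the initiality of $\pi$ among finite anabelian quotients together with residual finite-anabelianity of $\Gamma_{\rm an}$; the ``only if'' is a slip. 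It does hold in the situation the paper actually uses, namely when $\Gamma_{\rm an}$ is finite: then, since there are only finitely many subgroups between $\Lambda_0$ and $\Gamma$ and $\mathcal P$ is directed, $\Lambda_0$ itself lies in $\mathcal P$, so $\Gamma_{\rm an}$ is anabelian by Lemma~\ref{lem:anab}, and every quotient of $\Gamma_{\rm an}$ is anabelian because anabelianity passes to quotients. You should have either supplied that conditional argument or, better, noticed the $\mathbb F_2$ counterexample rather than attributing the difficulty to your own technique.
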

\begin{proof}
Consider the partially ordered set $\mathcal P$ of finite index subgroups $\Lambda$ of $\Gamma$, such that $\Gamma/\Lambda$ is anabelian. By the previous lemma, $\mathcal P$ is closed under finite intersections and hence directed. We set $\Lambda_0 = \cap_{\Lambda \in \mathcal P} \Lambda$ and define $\Gamma_{\rm an} = \Gamma/\Lambda_0.$
\end{proof}

It is natural to define $R_{\rm an}(d,w):= B(d,w)_{\rm an} = R(d,w)_{\rm an}$. We call $R_{\rm an}(d,w)$ the anabelian restricted Burnside group -- by its definition, it is the largest residually finite-anabelian quotient of $B(d,w)$. However, we can record the following corollary of our main theorem.

\begin{corollary}
Let $n,d \in \mathbb N$, and $w \in \mathbb F_n \setminus \{1\}$. Then, $R_{\rm an}(d,w)$ is finite.
\end{corollary}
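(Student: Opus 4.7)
The plan is to deduce the corollary directly from Theorem~\ref{thm:main} together with the elementary fact that a finitely generated group has only finitely many normal subgroups of any given finite index. While the intersection $\Lambda_0$ defining $R_{\rm an}(d,w)$ in the preceding corollary is \emph{a priori} over an infinite family of normal subgroups, Theorem~\ref{thm:main} forces the indexing family to be finite.

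Concretely, write $\ell$ for the length of $w$ and set $N := {\rm E}_{2\ell}(O(d\ell\ln(\ell)))$, the bound from Theorem~\ref{thm:main}. Every finite anabelian quotient $B(d,w)/\Lambda$ then satisfies $[B(d,w):\Lambda] \le N$. Since $B(d,w)$ is $d$-generated, for every finite group $H$ of order at most $N$ there are at most $|H|^{d}\le N^{d}$ homomorphisms $B(d,w)\to H$, and only finitely many isomorphism types of such $H$ exist. Consequently the collection
\[
\mathcal P = \{\Lambda \normal B(d,w) : B(d,w)/\Lambda \text{ is finite and anabelian}\}
\]
is finite, say $\mathcal P = \{\Lambda_{1},\dots,\Lambda_{k}\}$.

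Setting $\Lambda_{0} := \Lambda_{1}\cap\cdots\cap\Lambda_{k}$, the group $R_{\rm an}(d,w) = B(d,w)/\Lambda_{0}$ embeds diagonally as a subdirect product into the finite direct product $\prod_{i=1}^{k} B(d,w)/\Lambda_{i}$, hence is itself finite. There is no real obstacle beyond Theorem~\ref{thm:main}; the argument is a short packaging step that converts a uniform bound on the \emph{order} of finite anabelian quotients into a bound on the \emph{number} of such quotients, using only that $B(d,w)$ is finitely generated.
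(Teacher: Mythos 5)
Your argument is correct and is precisely the standard deduction the paper has in mind: the paper offers no explicit proof, simply recording the corollary as a consequence of Theorem~\ref{thm:main}, and the natural way to convert the uniform bound on the \emph{order} of finite anabelian quotients into finiteness of $R_{\rm an}(d,w)$ is exactly your observation that a $d$-generated group has only finitely many normal subgroups of index at most $N$. This matches the construction of $\Lambda_0$ in the corollary preceding the statement, so there is nothing to add.
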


The Fumagalli--Leinen--Puglisi structure result about finite groups satisfying a law has direct implications on the structure of $R(d,w)$ as well, but we were not able to say anything more specific.

\section{Length of laws and applications}

By the classification of finite simple groups (CFSG), see \cite{GLS-I, AS-I, AS-II}, a non-abelian finite simple group is either an alternating group ${\rm Alt}_{k}$ for $k \geq 5$, a finite simple group of Lie type or one of 26 sporadic groups. The finite simple groups of Lie type come in families, indexed by a Lie type $X$, one symbol from the list $A_k$, $^{2}A_k$, $B_k$, $C_k$, $D_k$, $^{2}D_k$, $E_6$, $^{2}E_6$, $E_7$, $E_8$, $F_4$, $G_2$, $^{3}D_4$, $^{2}B_2$, $^{2}F_4$, or $^{2}G_2$. Each finite simple group of Lie type is then of the form $X_k(q)$, where $q$ is a prime power, which is restricted to powers of $2$ in the cases ${}^{2}B_2(q), ^{2}F_4(q)$ and powers of $3$ in the case $^{2}G_2(q)$. We define $a=a(X)\in\mathbb N$ as in the following table, where some values depend on divisibility properties of $k$ and $q$:

\begin{table}[h]
\centering
\renewcommand{\arraystretch}{1.2}
\begin{tabular}{c|cccccc}
$X$ & $A_k$ & ${}^{2}A_k$ & $B_k$ & $C_k$ & $D_k$ & ${}^{2}D_k$  \\
\hline
$a$ & $\lfloor(k+1)/2\rfloor$ & $\lfloor(k+1)/2\rfloor$ &
$2\lfloor k/2\rfloor \mbox{ or } k $ &
$k$ &
$k-1 \mbox{ or } k-2$ &
$2\lfloor k/2\rfloor$ 
\end{tabular}

\begin{tabular}{c|cccccccccc}
$X$ & $E_6$ & ${}^{2}E_6$ & $E_7$ & $E_8$ & $F_4$ & $G_2$ & ${}^{3}D_4$ & ${}^{2}B_2$ & ${}^{2}F_4$ & ${}^{2}G_2$\\
\hline
$a$ & $4$ & $4$ & $7$ & $7$ & $4$ & $1$ & $3$ & $1$ & $2$ & $1$
\end{tabular}

\label{tab:a-values}
\end{table}

The main result of \cite{BradfordThom2024} gives matching lower and upper bounds (up to poly-logarithmic factors and apart from the case ${}^{2}B_2$) for the length of shortest laws in $G$. Let us only record the lower bounds, since these are the ones we will need:

\begin{theorem}\label{thm:BT-lower}
For $G=X(q)$ as above, every nontrivial word $w\in\mathbb F_2$ that is a law for $G$ has length at least $\Omega(q^{a})$, except when $G={}^{2}B_2(q)$, in which case every law has length at least $\Omega(q^{1/2})$.
\end{theorem}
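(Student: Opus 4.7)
The plan is to establish the lower bound by exhibiting, for each Lie type $X$ and $G=X(q)$, an explicit witness pair $(g,h)\in G\times G$ such that no nontrivial word of length $o(q^a)$ can evaluate to the identity at $(g,h)$. The key ingredient is a semisimple element $g\in G$ of very large multiplicative order, drawn from a specific maximal torus whose choice is dictated by the Lie type and the table of $a$-values; the classification of maximal tori via the twisted Weyl-group action of Springer--Steinberg guarantees the existence of a torus realising the claimed order $\Omega(q^a)$, and the two-case entries for $B_k, D_k$ reflect parity-of-rank and isogeny obstructions. The companion element $h$ should be chosen generically -- for instance a regular unipotent or a transvection-type element in general position -- so that $(g,h)$ generates $G$ and acts richly on a small faithful module of $G$.

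With the witness pair fixed, the core quantitative step is a spectral/polynomial-identity argument. In a faithful matrix representation $\rho$ of $G$ of dimension $N$ depending only on $X$, one expresses $\rho(g)$ in an eigenbasis and writes $\rho(h)$ as a matrix $H$ in the same basis. The relation $\rho(w(g,h))=I$ for a word $w = g^{a_1} h^{b_1} g^{a_2} h^{b_2}\cdots$ of length $\ell = \sum_i (|a_i|+|b_i|)$ then unfolds into a system of Laurent-polynomial identities in the eigenvalues of $\rho(g)$, each of total degree bounded by $\ell$. Because $g$ has multiplicative order $\Omega(q^a)$, such identities cannot hold for these eigenvalues unless $\ell\ge \Omega(q^a)$; the non-vanishing of the relevant coefficients in the expansion is precisely what the generic choice of $h$ is designed to guarantee.

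The Suzuki case ${}^{2}B_{2}(q)$ is genuinely exceptional. Its maximal cyclic tori have orders only $q-1$ and $q\pm\sqrt{2q}+1$, so the largest available torus order is of size $\Theta(q)$ rather than a higher power of $q$; combined with the defining field automorphism $x\mapsto x^{2^m}$ for $q=2^{2m+1}$, the argument above can only deliver the weaker $\Omega(q^{1/2})$ bound, and matching even this requires exploiting the quadratic constraint on the Suzuki field automorphism. The main obstacle throughout is the uniform verification across all Lie types of (i) the existence of the required maximal torus realising the exponent $a(X)$ as in the table, and (ii) a non-degeneracy condition on the companion element $h$ ensuring that the polynomial identities forced by $w(g,h)=1$ are genuine constraints rather than trivially satisfied ones. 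Both steps require a careful case-by-case analysis with root systems, twisted Frobenius actions, and the detailed structure of semisimple conjugacy classes in each family.
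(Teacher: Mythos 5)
This theorem is not proved in the paper under review: it is imported verbatim as a result of Bradford and the author from \cite{BradfordThom2024}, so there is no ``paper's proof'' here to compare your sketch against. I will therefore assess your proposal on its own merits.

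Your high-level plan --- produce a witness pair $(g,h)$ depending on $w$, with $g$ a semisimple element of large order in a suitably chosen maximal torus, and argue that $w(g,h)=1$ forces a polynomial constraint whose degree must be at least the order of $g$ --- is in the right general spirit. However, the quantitative core, namely the step ``$\rho(w(g,h))=I$ gives Laurent-polynomial identities in the eigenvalues of $\rho(g)$, and since $g$ has order $\Omega(q^a)$ these cannot hold unless $\ell\ge\Omega(q^a)$,'' does not work over a field of positive characteristic. If $\lambda\in\overline{\mathbb F}_p^\times$ has multiplicative order $N$, a nonzero polynomial over the prime field vanishing at $\lambda$ need only be divisible by the minimal polynomial of $\lambda$, whose degree is $\operatorname{ord}_N(p)$; for $N=q^a-1$ with $q=p^f$ this is $af=a\log_p q$, which is logarithmic in $N$, not $\Omega(N)$. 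So the eigenvalue-degree argument only yields a lower bound of size $O(\log|T|)$, exponentially weaker than the claimed $\Omega(q^a)$. The intuition you are using is a characteristic-zero intuition (cyclotomic polynomials have degree $\varphi(N)$), and it breaks over $\mathbb F_q$.

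A second concrete gap is that your construction does not address words $w\in[\mathbb F_2,\mathbb F_2]$. Any word with trivial abelianization is a law for every abelian group, hence vanishes identically on $T\times T$ for any torus $T$, so no choice of two elements of a torus can serve as a witness; you would need $h$ to lie outside the torus so that $\langle g,h\rangle$ is nonabelian, and then the ``system of polynomial identities'' you invoke is no longer the naive eigenvalue picture but involves the action of $h$ on $T$ (e.g.\ via a Weyl element) and the integral relations this imposes on the torus as a module over that action. The bounds in the $a$-table are exactly engineered to split the relevant maximal torus into two large, essentially coprime cyclic factors, and the group-theoretic coprimality structure --- rather than the field-theoretic minimal-polynomial degree --- is what drives the $\Omega(q^a)$ bound. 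As written, your proposal does not supply these two ingredients, and without them the argument cannot reach the stated conclusion.
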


There is no comparable result for alternating groups, where the best known lower bound for the length of a law for ${\rm Alt}_k$ is linear in $k$. Together with the results from \cite{BradfordThom2024}, this refines a more classical result of Jones \cite{MR344342} saying that there are only finitely many non-isomorphic finite simple groups satisfying a law of given length.
We will see that the bound for ${\rm Alt}_k$, which is likely far from the truth (see \cite{MR3937332}*{Problem 4.3}), is the bottleneck for our estimates. See also \cite{kozmathom2016divisibility} for quasi-polynomial upper bounds on the length of laws.

As a key step, we need to prove a bound on the size of a $d$-generated product of non-abelian finite simple groups that satisfy a law of length $\ell$. The following lemma is straightforward, see also \cite{MR1681530}*{Lemma 5}.

\begin{lemma} \label{lem:product}
Let $G_1,\dots,G_t$ be pairwise nonisomorphic finite simple groups and consider $G= G_1^{\times n_1} \times \cdots \times G_t^{\times n_t}$ for $n_1,\dots,n_t \in \mathbb N$. Then, $G$ is $d$-generated if and only if $G_m^{n_m}$ is $d$-generated for each $1 \leq m \leq t.$
\end{lemma}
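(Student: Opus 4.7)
The plan is to observe that the forward direction is trivial and that the converse reduces to showing any subdirect product of the $G_m^{n_m}$ equals the full product, which follows from Goursat's lemma together with the fact that pairwise non-isomorphic simple groups share no simple quotients. The forward implication is a single line: each $G_m^{n_m}$ is a quotient of $G$ under the canonical projection, so any $d$-element generating set of $G$ projects to a $d$-element generating set of $G_m^{n_m}$.

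For the converse, I would pick, for each $m$, a generating $d$-tuple $(a^{(m)}_1,\dots,a^{(m)}_d)$ of $G_m^{n_m}$ and assemble these componentwise into $d$ elements $a_i := (a_i^{(1)},\dots,a_i^{(t)}) \in G$. The subgroup $H := \langle a_1,\dots,a_d\rangle$ projects onto every factor $G_m^{n_m}$ by construction, so $H$ is a subdirect product of the $G_m^{n_m}$; it remains to show $H = G$.

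I would prove $H = G$ by induction on $t$. Writing $G = K \times N$ with $K := \prod_{m<t} G_m^{n_m}$ and $N := G_t^{n_t}$, Goursat's lemma describes $H$ by normal subgroups $K_1 \trianglelefteq K$, $N_1 \trianglelefteq N$ together with an isomorphism $K/K_1 \cong N/N_1$, the case $H=G$ corresponding to both quotients being trivial. A nontrivial common quotient would admit a simple quotient group $S$ that is simultaneously a simple quotient of $K$ (hence isomorphic to some $G_m$ with $m<t$, since the only simple quotients of a product of copies of a simple group are copies of that group) and a simple quotient of $N = G_t^{n_t}$ (hence isomorphic to $G_t$). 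The pairwise non-isomorphism of the $G_m$ rules this out, forcing $K_1 = K$ and $N_1 = N$, hence $H = G$.

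The argument is essentially routine, and I do not anticipate a serious obstacle. The one point that deserves a brief justification is the identification of the simple quotients of $G_m^{n_m}$ with $G_m$ itself, which holds uniformly whether $G_m$ is a non-abelian simple group (where a maximal normal subgroup of $G_m^{n_m}$ is obtained by omitting a single coordinate) or an abelian simple group of prime order (where it is immediate from linear algebra over $\mathbb F_p$).
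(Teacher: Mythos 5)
The paper does not actually prove this lemma; it calls it ``straightforward'' and refers to Jarden--Lubotzky \cite{MR1681530}*{Lemma 5}, so there is no in-paper argument to compare against. Your proof is correct and is a clean, standard way to do it: the forward direction is immediate from passing to quotients, and for the converse, assembling the generating tuples componentwise gives a subdirect product $H$ of the $G_m^{n_m}$, and the Goursat argument correctly forces $H=G$. The crux is the claim that a simple quotient of $\prod_{m<t} G_m^{n_m}$ must be isomorphic to one of the $G_m$ with $m<t$, while a simple quotient of $G_t^{n_t}$ must be isomorphic to $G_t$; since the $G_m$ are pairwise nonisomorphic, the common quotient in Goursat's lemma is trivial. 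Two small points worth tightening if you write this up. First, the inductive step needs that the projection of $H$ to $K=\prod_{m<t}G_m^{n_m}$ is \emph{all} of $K$ before Goursat applies with $K_0=K$; this is exactly the statement for $t-1$ factors applied to your specific tuples, and is worth saying explicitly. Second, your parenthetical ``the only simple quotients of a product of copies of a simple group are copies of that group'' describes $G_t^{n_t}$, but $K$ is a product over \emph{several} nonisomorphic simple groups; the cleaner justification for both $K$ and $N$ is uniform: each $G_m^{n_m}$ maps to the simple quotient with normal image, so some $G_m^{n_m}$ (and hence $G_m$, by the single-type case) surjects onto it.
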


\begin{lemma} \label{lem:alternating}
Let $G$ be a $d$-generated product of simple alternating groups that satisfies a law of length $\ell$. Then, $|G| \leq {\rm E}_2(O(d \ell \ln(\ell))).$
\end{lemma}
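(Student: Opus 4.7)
The plan is to decompose $G$ via Lemma \ref{lem:product} and then bound, separately, (i) which alternating groups can appear as factors, and (ii) with what multiplicity each of them can appear in a $d$-generated direct power. Writing
$G = \prod_{m=1}^t \Alt_{k_m}^{n_m}$
with $5 \leq k_1 < \cdots < k_t$, Lemma \ref{lem:product} tells us that each block $\Alt_{k_m}^{n_m}$ is itself $d$-generated. Moreover, $\Alt_{k_m}$ is a simple quotient of $G$ and therefore satisfies the same law of length $\ell$. The linear lower bound on the length of a law for $\Alt_k$ (mentioned immediately after Theorem \ref{thm:BT-lower}, going back to Jones \cite{MR344342}) then forces $k_m = O(\ell)$; in particular there are at most $t = O(\ell)$ distinct alternating types.

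To bound the multiplicities $n_m$ I invoke the classical Hall--Wiegold principle: for a non-abelian finite simple group $S$, the power $S^n$ is $d$-generated if and only if $n \leq \phi_d(S)/|\operatorname{Aut}(S)|$, where $\phi_d(S)$ counts the generating $d$-tuples of $S$. This is a direct consequence of Goursat's lemma, since a $d$-generated subdirect power of $S$ equals $S^n$ precisely when the $n$ coordinate projections produce $d$-tuples that are pairwise inequivalent under $\operatorname{Aut}(S)$. Applying this with $S = \Alt_{k_m}$, and using the trivial bound $\phi_d(S)\leq |S|^d$ together with $|\operatorname{Aut}(\Alt_k)| \geq |\Alt_k|$ (the case $k=6$ only improves the constants), I get
\[
n_m \;\leq\; |\Alt_{k_m}|^{d-1} \;\leq\; (k_m!)^{d-1}.
\]

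To conclude, since $k_m = O(\ell)$, Stirling gives $\ln(k_m!) = O(\ell \ln \ell)$, hence $n_m \leq \exp(O(d\ell \ln \ell))$ and $\ln|\Alt_{k_m}| = O(\ell \ln \ell)$. Summing over the $t = O(\ell)$ blocks,
\[
\ln|G| \;=\; \sum_{m=1}^t n_m \ln|\Alt_{k_m}| \;\leq\; O(\ell) \cdot \exp(O(d\ell \ln \ell)) \cdot O(\ell \ln \ell) \;\leq\; \exp(O(d\ell \ln \ell)),
\]
so $|G| \leq {\rm E}_2(O(d\ell\ln\ell))$, as required. The main obstacle is really the bound on $n_m$ via Hall--Wiegold; everything else is elementary once the linear law-length bound for $\Alt_k$ is taken as input. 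It is worth noting that any improvement of the linear lower bound on the length of a law for $\Alt_k$ (or of the trivial estimate $\phi_d(S) \leq |S|^d$) would translate directly into a sharper exponent here, consistent with the remark in Section \ref{sec:nons} that the alternating bottleneck is likely far from sharp.
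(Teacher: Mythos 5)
Your proof is correct and follows essentially the same strategy as the paper: bound the largest alternating group that can appear (via the linear lower bound on the length of a law for $\mathrm{Alt}_k$), bound the multiplicity of each simple factor by counting generating $d$-tuples, and multiply. The one deviation is that you invoke the Hall--Wiegold criterion to get $n_m \leq \phi_d(S)/|\operatorname{Aut}(S)| \leq |S|^{d-1}$, whereas the paper uses the cruder $n_m \leq |S|^d$; your refinement is valid but does not change the asymptotic ${\rm E}_2(O(d\ell\ln\ell))$.
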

\begin{proof}
We may assume that $G = \prod_{m=5}^t {\rm Alt}^{\times n_m}_m$ with $n_m \in \mathbb N$. Since $G$ satisfies a law of length $\ell$, we may assume that $t \leq \ell.$
Moreover, if ${\rm Alt}_m^{n_m}$ is $d$-generated, then $n_m \leq |{\rm Alt}_m|^d$ and in particular $n_m \leq \exp(d m \ln(m)).$ We conclude that $$|G| \leq \exp\left( \sum_{m=5}^\ell m \ln(m) \exp(d m \ln(m)) \right) \leq \exp(\ell^2 \ln(\ell) \exp(d \ell \ln(\ell))) .$$ This implies the claim.
\end{proof}

Better bounds hold for finite simple groups of Lie type. Here, we need a rough upper bound for the size of the finite simple group $X(q)$. Indeed, we have $|X(q)| \leq O(q^{b(X)})$ for $b(X)$ in the following table:

\begin{table}[h]
\centering

\begin{tabular}{c|cccccccc}
$X$&$A_k(q)$ & ${}^{2}A_k(q^{2})$ & $B_k(q)$ & $C_k(q)$ & $D_k(q)$ & ${}^{2}D_k(q^{2})$ &${}^{2}B_2(q)$ & ${}^{3}D_4(q^{3})$\\
\hline
$b(X)$ &$k^{2}+2k$ & $k^{2}+2k$ & $2k^{2}+k$ & $2k^{2}+k$ & $2k^{2}-k$ & $2k^{2}-k$ &$5$ & ${28}$\\
\end{tabular}

\begin{tabular}{c|cccccccc}
$X$&  $F_4(q)$ & ${}^{2}F_4(q)$ & $G_2(q)$ & ${}^{2}G_2(q)$ & $E_6(q)$ & ${}^{2}E_6(q^{2})$ & $E_7(q)$ & $E_8(q)$\\
\hline
$b(X)$& ${52}$ & ${26}$ & ${14}$ & ${7}$ & ${78}$ & ${78}$ & ${133}$ & ${248}$\\
\end{tabular}
\end{table}

The following result is the analogue of Lemma \ref{lem:alternating} for simple groups of a fixed Lie type.

\begin{lemma} \label{lem:Lie}
Let $G$ be a $d$-generated product of finite simple groups of Lie type $X$ that satisfies a law of length $\ell$. Then, $|G| \leq {\rm E}_2(O(d \ln(\ell )^2)).$
\end{lemma}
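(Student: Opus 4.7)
The plan is to decompose $G \cong \prod_{(k,q)} X_k(q)^{n_{k,q}}$ as a direct product of pairwise non-isomorphic simple groups in the given Lie family $X$ (with rank $k$ allowed to vary in the classical families $A,{}^{2}A,B,C,D,{}^{2}D$, and fixed otherwise), and then to estimate the number of admissible pairs $(k,q)$, the size of each factor $X_k(q)$, and the multiplicity $n_{k,q}$ separately.

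By Lemma \ref{lem:product}, each $X_k(q)^{n_{k,q}}$ is itself $d$-generated, so in particular $n_{k,q} \leq |X_k(q)|^d$. Next, applying Theorem \ref{thm:BT-lower} to each factor yields $q^{a(k)} \leq O(\ell)$ (respectively $q^{1/2} \leq O(\ell)$ in the Suzuki case). The $a$-table shows $a(k) = \Theta(k)$ in the classical families and that $a$ is a positive constant in every exceptional or twisted case; since $q\geq 2$, this forces $k \leq O(\ln \ell)$ in the classical regime, and for each admissible $k$ at most $O(\ell^{1/a(k)})$ prime powers $q$ can occur. Summing over $k$, the total number of pairs $(k,q)$ appearing in the decomposition is $\mathrm{poly}(\ell)$.

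For the size of each factor, combining $|X_k(q)| \leq O(q^{b(k)})$ from the $b$-table (with $b(k) = O(k^2)$ in the classical case, constant otherwise) with $q \leq O(\ell^{1/a(k)})$ gives $|X_k(q)| \leq \exp\!\bigl(O((b(k)/a(k))\ln \ell)\bigr)$. The key arithmetic observation is that $b(k)/a(k) = O(k)$ uniformly across the tables, so together with $k \leq O(\ln \ell)$ this simplifies to $|X_k(q)| \leq \exp(O(\ln(\ell)^2))$. Hence $n_{k,q} \leq |X_k(q)|^d \leq \exp(O(d\ln(\ell)^2))$, and putting everything together,
$$\ln|G| = \sum_{k,q} n_{k,q}\ln|X_k(q)| \leq \mathrm{poly}(\ell)\cdot \exp(O(d\ln(\ell)^2))\cdot O(\ln(\ell)^2) = \exp(O(d\ln(\ell)^2)),$$
which gives $|G| \leq {\rm E}_2(O(d\ln(\ell)^2))$ as required.

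The main obstacle I anticipate is bookkeeping: balancing the two competing regimes uniformly across the classical families — large rank $k$ forcing small $q$ versus small $k$ with large $q$ — and verifying that the ratio $b(k)/a(k)=O(k)$ really does hold row by row in the tables. Everything else is routine direct-product estimation; the substantive work has already been done in Theorem \ref{thm:BT-lower}.
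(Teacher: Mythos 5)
Your proposal is correct and follows essentially the same approach as the paper's proof: both use Lemma \ref{lem:product} to bound multiplicities by $|X_k(q)|^d$, derive $k = O(\ln\ell)$ and $q \leq O(\ell^{1/a(k)})$ from Theorem \ref{thm:BT-lower}, and sum over all admissible $(k,q)$. You spell out the key arithmetic fact $b(k)/a(k) = O(k)$ needed to make the bound uniform across families, which the paper's terser proof leaves implicit.
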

\begin{proof} The proof follows the ideas of the proof of Lemma \ref{lem:alternating}. Let's start with one of the infinite families. We need to estimate a product
$$\prod_{q,k} |X_k(q)|^{|X_k(q)|^d},$$
where the product runs over all $q,k$, such that $X_k(q)$ satisfies a law of length $\ell$. Roughly speaking, this implies that $q^{c_1k} \leq \ell$ for some constant $c_1$, while $|X_k(q)| \leq q^{c_2 k^2}$. In particular, $k \leq c_2 \ln(\ell)$ and $q \leq \ell^{1/(c_1k)}.$ Thus,
$$|G| \leq \prod_{k=1}^{c_2 \ln(\ell)}\prod_{q=1}^{\ell^{1/(c_1k)}}|X_k(q)|^{|X_k(q)|^d} \leq \exp\left(c_3 \ln(\ell)^3 \exp(  \ell^{c_4 d \ln(\ell)}) \right),$$
where $c_2,c_3$ and $c_4$ are constants.
Thus we obtain an upper bound of the form
${\rm E}_2\left(O(d \ln(\ell)^2)\right)$ as claimed. The same bound applies for families of bounded rank.
\end{proof}

Since, using the CFSG, there are only finitely many Lie types and finitely many sporadic groups, we obtain from Lemma \ref{lem:product}, Lemma \ref{lem:alternating} and Lemma \ref{lem:Lie} the following proposition:

\begin{proposition} \label{prop:bound}
Let $d,n \in \mathbb N$ and $w \in \mathbb F_n$. Let $G$ be a $d$-generated product of non-abelian finite simple groups that satisfies a law of length $\ell$, then
$$|G| \leq {\rm E}_2(O(d\ell \ln(\ell))).$$
\end{proposition}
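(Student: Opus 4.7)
The plan is to reduce to the three cases (alternating, Lie type, sporadic) supplied by the classification of finite simple groups and then combine the three bounds. Write $G = \prod_m G_m^{\times n_m}$ as a product over pairwise nonisomorphic non-abelian simple groups $G_m$. By Lemma \ref{lem:product}, $d$-generation of $G$ is equivalent to $d$-generation of each isotypic block $G_m^{\times n_m}$, and hence also of any subproduct obtained by selecting a subset of isotypic blocks. Group the blocks according to the CFSG as
\[ G = G_{\mathrm{alt}} \times \Bigl( \prod_X G_{\mathrm{Lie},X} \Bigr) \times G_{\mathrm{sp}}, \]
where $G_{\mathrm{alt}}$ collects the alternating blocks, $G_{\mathrm{Lie},X}$ collects the blocks of fixed Lie type $X$ (and the product runs over the finitely many such types), and $G_{\mathrm{sp}}$ collects the blocks over the $26$ sporadic groups. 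Each factor is $d$-generated and inherits the length-$\ell$ law from $G$.

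Next, I would apply the three case bounds. Lemma \ref{lem:alternating} yields $|G_{\mathrm{alt}}| \leq \mathrm{E}_2(O(d\ell \ln(\ell)))$. For each Lie type $X$, Lemma \ref{lem:Lie} gives $|G_{\mathrm{Lie},X}| \leq \mathrm{E}_2(O(d \ln(\ell)^2))$; since the number of Lie types is an absolute constant, the combined contribution of all $G_{\mathrm{Lie},X}$ is still bounded by $\mathrm{E}_2(O(d \ln(\ell)^2))$. For the sporadic part, each isotypic block $M^{\times n_M}$ with $M$ sporadic has $n_M \leq |M|^d$ by the $d$-generation requirement, and since $|M|$ is an absolute constant (and there are only $26$ sporadics), $|G_{\mathrm{sp}}| \leq \mathrm{E}_2(O(d))$.

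Finally, I would add the three logarithmic bounds. Taking logarithms,
\[ \ln|G| \leq \ln|G_{\mathrm{alt}}| + \sum_X \ln|G_{\mathrm{Lie},X}| + \ln|G_{\mathrm{sp}}| \leq \mathrm{E}_1(O(d\ell \ln(\ell))) + \mathrm{E}_1(O(d \ln(\ell)^2)) + \mathrm{E}_1(O(d)), \]
and the first term dominates the others (noting $\ln(\ell)^2 \leq \ell \ln(\ell)$ for $\ell \geq 2$; the case $\ell = 1$ forces $G$ trivial). Exponentiating once more yields $|G| \leq \mathrm{E}_2(O(d\ell \ln(\ell)))$.

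There is no real obstacle here; the proof is essentially a bookkeeping exercise that packages the three preceding lemmas via the CFSG. The only points requiring care are verifying that Lemma \ref{lem:product} passes $d$-generation down to the CFSG-grouped subproducts, and that the big-$O$ constants in the Lie type bound absorb the finitely many Lie families into a single term.
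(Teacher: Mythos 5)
Your proof is correct and matches the paper's (implicit) argument exactly: the paper states the proposition as an immediate consequence of Lemmas \ref{lem:product}, \ref{lem:alternating}, and \ref{lem:Lie} together with the finiteness of the number of Lie types and sporadic groups, and your write-up simply fills in the same bookkeeping (decompose into isotypic blocks, apply the three case bounds, observe the alternating term dominates).
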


We are now ready for the proof of the main theorem.

\section{Proof of the main theorem}

Theorem \ref{thm:main} follows  from a straightforward induction argument on the size of $d$-generated finite groups of nonsolvable length $k$ satisfying a law of length $\ell.$ In order to streamline the estimates, note that we have that ${\rm E}_k$ is monotone, ${\rm E}_k(x) \leq {\rm E}_l(x)$ for $k \leq l$ and ${\rm E}_k(x){\rm E}_k(y) \leq {\rm E}_k(xy)$ for all $k,l \in \mathbb N$, $x,y \geq 2.$

\begin{proposition} \label{prop:nonsolvable}
The size of a $d$-generated finite anabelian group of nonsolvable length $k$ satisfying a law of length $\ell$ is bounded by
${\rm E}_{2k}(O(d \ell \ln(\ell))).$
\end{proposition}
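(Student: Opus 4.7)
My plan is induction on the nonsolvable length $k$, using Proposition \ref{prop:bound} as the one-step bound and the Nielsen--Schreier rank formula to track the generator count when descending to a normal subgroup. The base case $k=0$ is immediate, since a simultaneously solvable and anabelian finite group has no composition factors and is trivial.

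For the inductive step, I would fix a $d$-generated anabelian $G$ with $\lambda(G) = k \geq 1$ satisfying a law of length $\ell$, take a normal series realising $\lambda(G) = k$, and discard any solvable factor (which, by Jordan--H\"older and anabelianness of $G$, must already be trivial). Letting $N$ be the penultimate term of the resulting series, $G/N$ is a direct product of non-abelian finite simple groups and $N \lhd G$ is again anabelian with $\lambda(N) \leq k-1$. Proposition \ref{prop:bound} applied to the $d$-generated quotient $G/N$ yields $|G/N| \leq {\rm E}_2(C)$ with $C = O(d\ell \ln \ell)$. The Nielsen--Schreier formula, applied to the preimage of $N$ in a free group of rank $d$ mapping onto $G$, shows that $N$ is generated by at most $d' := 1 + (d-1)|G/N| \leq d \cdot {\rm E}_2(C)$ elements; being a subgroup, it still satisfies the law of length $\ell$. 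The inductive hypothesis then gives $|N| \leq {\rm E}_{2(k-1)}(O(d' \ell \ln \ell))$. Using the properties of ${\rm E}_\bullet$ recorded at the start of the section (in particular $2 e^x \leq e^{x+1}$, so ${\rm E}_2(x)^2 \leq {\rm E}_2(x+1)$), one has $O(d' \ell \ln \ell) \leq C \cdot {\rm E}_2(C) \leq {\rm E}_2(O(C))$, hence
\begin{equation*}
|N| \leq {\rm E}_{2(k-1)}\bigl({\rm E}_2(O(C))\bigr) = {\rm E}_{2k}\bigl(O(d\ell \ln \ell)\bigr),
\end{equation*}
and multiplying by $|G/N| \leq {\rm E}_2(C)$, absorbed by the analogous estimate ${\rm E}_{2k}(x)^2 \leq {\rm E}_{2k}(x+O(1))$ (valid for $k \geq 1$), delivers the claimed bound on $|G|$.

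The only real obstacle is the Schreier blow-up: descending to a normal subgroup whose anabelian quotient has order up to ${\rm E}_2(C)$ multiplies the generator count by a double exponential in $d\ell \ln \ell$, which adds exactly two to the tower height at each inductive step and accounts for the $2k$ in the conclusion rather than a cleaner $k+1$. Any improvement in the generation rank of anabelian groups of bounded nonsolvable length -- or indeed any progress on the Larsen-type conjecture mentioned after Theorem \ref{thm:puglisi} -- would propagate directly to a sharper bound, since the rest of the argument is essentially bookkeeping with the monotonicity and multiplicativity properties of ${\rm E}_\bullet$.
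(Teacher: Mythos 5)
Your proposal follows essentially the same route as the paper: induct on nonsolvable length, use the penultimate term of a normal series to split off a top quotient that is a direct product of non-abelian finite simple groups, bound that quotient via Proposition~\ref{prop:bound}, control the generator count of the kernel by the Schreier index formula, and then close the induction by elementary manipulations with the iterated-exponential function. The only cosmetic differences are starting the induction at $k=0$ rather than $k=1$ and using $O(\cdot)$ notation where the paper carries an explicit constant $2x$ with $x = cd\ell\ln(\ell)$ so that the constant manifestly does not degrade with $k$ -- a point worth pinning down, as you yourself note, but which the paper's computation shows is unproblematic.
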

\begin{proof}
Let $c$ be the constant implicit in Proposition \ref{prop:bound}. We assume without loss of generality that $c,\ell \geq 2$, so that $x:= c d \ell \ln(\ell) \geq 2.$ We intend to prove an upper bound of ${\rm E}_{2k}(2x)$ by induction on $k$. The claim for $k=1$ was proven in the previous section. If $G$ is a $d$-generated finite anabelian group of nonsolvable length $k$, then it fits into a short exact sequence
$$1 \to H \to G \to G/H \to 1,$$
where $H$ is anabelian of nonsolvable length $k-1$ and $G/H$ is $d$-generated product of finite non-abelian simple groups. If $G$ satisfies a law of length $\ell$, then so does $H$ and $G/H$. We conclude that the size of $G/H$ is bounded by ${\rm E}_2(x)$ by Proposition \ref{prop:bound}. Hence, by Schreier's theorem, the group $H$ is generated by $(d-1){\rm E}_2(x) +1 \leq d{\rm E}_2(x) $ elements. Assuming by induction that the size of $H$ is bounded by
${\rm E}_{2(k-1)}(2 {\rm E}_2(x) x),$ we obtain
\begin{eqnarray*}
|G| &\leq& {\rm E}_{2(k-1)}(2 {\rm E}_2(x) x) \cdot {\rm E}_2(x) \\
&\leq& {\rm E}_{2(k-1)}(2 {\rm E}_2(x) x) \cdot {\rm E}_{2(k-1)}(x) \\
&\leq& {\rm E}_{2(k-1)}\left(2 {\rm E}_2(x)x^2  \right) \\
&\leq& {\rm E}_{2k}\left(x + \ln \ln(2x^2)  \right) \\
&\leq&{\rm E}_{2k}\left(2x \right).
\end{eqnarray*}
This finishes the proof.
\end{proof}

Now, the previous proposition readily implies the main theorem using Theorem \ref{thm:puglisi}. This finishes the proof of the main theorem.

\section{Examples}
\label{sec:examples}

The case $w \not \in [\mathbb F_n,\mathbb F_n]$ is called the \emph{periodic} case; indeed, by a result of B.H.\ Neumann \cite{BHNeumann1937}, satisfying a law $w \not \in [\mathbb F_n,\mathbb F_n]$ is equivalent to satisfying laws $x^n,w_0 \in [\mathbb F_n,\mathbb F_n]$. Here, $n$ is the ${\rm gcd}$ of the total orders of the variables appearing in $w$. In this case $B(d,w)$ is a quotient of $B(d,n)$. Thus, if $n$ is odd, it follows that every finite quotient of $B(d,w)$ is of odd order and hence solvable by the Feit--Thompson theorem. The same conclusion holds if $n$ is of the form $n=2^ap^b$ for $a,b \in \mathbb N$ and a prime $p$ by Burnside's theorem. In particular, $R_{\rm an}(d,w)$ is trivial in those cases. On the other side, if $n=2\cdot 3 \cdot 5=30$ and $d \geq 2$, then $R_{\rm an}(d,30)$ is non-trivial since ${\rm Alt}_5$ satisfies the law $x^{30}$.

\vspace{0.2cm}

There is another case that can be understood directly:

\begin{proposition}
Let $w_i \in \mathbb  F_{n_i}$ for $1 \leq i \leq 2$ and $w=[w_1,w_2] \in \mathbb F_{n}$ for $n=n_1+n_2$ be the commutator in disjoint variables. Then, $R_{\rm an}(d,w)$ naturally embeds into $R_{\rm an}(d,w_1) \times R_{\rm an}(d,w_2)$ as a subdirect product.
\end{proposition}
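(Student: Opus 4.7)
The plan is to realise $G := R_{\rm an}(d,w)$ as an embedded subdirect product through two natural surjections $\pi_i \colon G \twoheadrightarrow R_{\rm an}(d,w_i)$, and to argue that the intersection of their kernels is trivial. First I would observe that each law $w_i$ is stronger than $w=[w_1,w_2]$: if $w_1$ holds identically in some group, then a fortiori $[w_1(\cdot),w_2(\cdot)]=1$, so $B(d,w)$ surjects onto $B(d,w_i)$ and hence onto $R_{\rm an}(d,w_i)$. Since $R_{\rm an}(d,w_i)$ is a finite anabelian quotient of $B(d,w)$, the universal property of $R_{\rm an}(d,w)$ established after Lemma~\ref{lem:anab} supplies the surjection $\pi_i$. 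Combining them gives a subdirect map
\[
\pi = (\pi_1,\pi_2)\colon G \longrightarrow R_{\rm an}(d,w_1) \times R_{\rm an}(d,w_2),
\]
and the whole claim reduces to $\ker\pi = \{1\}$.

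Next I would identify $N_i := \ker\pi_i$ with the verbal subgroup $w_i(G)$. The inclusion $w_i(G)\subseteq N_i$ is immediate, since $G/N_i \cong R_{\rm an}(d,w_i)$ satisfies $w_i$. Conversely, $G/w_i(G)$ is a $d$-generated finite anabelian group satisfying $w_i$, hence by the universal property a quotient of $R_{\rm an}(d,w_i) = G/N_i$, which gives $N_i \subseteq w_i(G)$. In particular $N_1$ and $N_2$ are characteristic in $G$.

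The crux—where I expect the real content of the proposition to sit—is the triviality $N_1 \cap N_2 = \{1\}$. Because $G$ satisfies $w=[w_1,w_2]$, each generator $w_1(g_1,\dots,g_{n_1})$ of $N_1$ commutes with each generator $w_2(h_1,\dots,h_{n_2})$ of $N_2$; hence $[N_1,N_2]=1$, and $N_1 \cap N_2$ lies in the centre of $N_1 N_2$, so in particular is abelian. It is also normal in $G$. Here the anabelian hypothesis enters decisively: any finite anabelian group has trivial solvable radical, since a nontrivial normal abelian subgroup would, after refinement to a composition series of $G$, contribute an abelian composition factor. Therefore $N_1\cap N_2=\{1\}$, so $\pi$ is injective; surjectivity of each $\pi_i$ makes the image subdirect, completing the proof.
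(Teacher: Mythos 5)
Your proof is correct and follows essentially the same route as the paper: identify the kernels $N_i$ with the verbal subgroups $w_i(G)$, use the law $w=[w_1,w_2]$ to get $[N_1,N_2]=1$ and hence $N_1\cap N_2$ abelian and normal, and invoke anabelianness of $G$ to conclude $N_1\cap N_2=\{1\}$. The paper's writeup is a bit more terse but the decomposition and the role of every hypothesis are the same.
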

\begin{proof}
Consider the verbal subgroup of $R_{\rm an}(d,w)$ associated with $w_i$ by $N_i$. It is easy to see that $[N_1,N_2]$ is trivial since $R_{\rm an}(d,w)$ satisfies the law $w=[w_1,w_2]$, and hence $N_1 \cap N_2$ abelian. Since $R_{\rm an}(d,w)$ is anabelian, it follows that the normal subgroup $N_1 \cap N_2$ is trivial. Hence, $R_{\rm an}(d,w)$ embeds into $R_{\rm an}(d,w)/N_1 \times R_{\rm an}(d,w)/N_2$. However, $R_{\rm an}(d,w)/N_i$ is anabelian by Lemma \ref{lem:anab} and satisfies the law $w_i$. We conclude that also the natural homomorphism $R_{\rm an}(d,w) \to R_{\rm an}(d,w_1) \times R_{\rm an}(d,w_2)$ is injective. This finishes the proof.
\end{proof}

The previous proposition covers the example $w=[x^n,y]$ that has been studied by Adian--Atabekyan \cite{AdianAtabekyan2018} and we get that $R_{\rm an}(d,w)$ is isomorphic to $R_{\rm an}(d,n)$ in this case.

\vspace{0.2cm}

To contrast the finiteness of $R_{\rm an}(d,w)$, it would be very interesting to have more results saying that $B(d,w)$ is infinite (or maybe even non-amenable) for suitable words $w$. It seems natural to conjecture that if $w$ is sufficiently complicated, then $B(d,w)$ is infinite.

\section*{Acknowledgments}

The author thanks Henry Bradford and Jakob Schneider for helpful discussions, and Goulnara Arzhantseva for carefully reading of an early draft, for discussions related to Section \ref{sec:examples}, and for drawing attention to references \cite{AdianAtabekyan2018} and \cite{BHNeumann1937}.

\begin{bibdiv}
\begin{biblist}

\bib{Adian1979}{book}{
  author={Adian, Sergei I.},
  title={The Burnside Problem and Identities in Groups},
  series={Ergebnisse der Mathematik und ihrer Grenzgebiete},
  volume={95},
  publisher={Springer},
  date={1979},
}

\bib{AdianAtabekyan2018}{article}{
  author={Adian, Sergei I.},
  author={Atabekyan, Varujan S.},
  title={Central extensions of free periodic groups},
  journal={Mat. Sb.},
  volume={209},
  date={2018},
  number={12},
  pages={3–16},
  translation={
    journal={Sb. Math.},
    volume={209},
    date={2018},
    number={12},
    pages={1677–1689}
  },
}

\bib{AS-I}{book}{
  author={Aschbacher, Michael},
  author={Smith, Stephen D.},
  title={The classification of quasithin groups. I. Structure of strongly quasithin $K$-groups},
  series={Mathematical Surveys and Monographs},
  volume={111},
  publisher={American Mathematical Society},
  place={Providence, RI},
  date={2004}
}
\bib{AS-II}{book}{
  author={Aschbacher, Michael},
  author={Smith, Stephen D.},
  title={The classification of quasithin groups. II. Main theorems: the classification of simple QTKE-groups},
  series={Mathematical Surveys and Monographs},
  volume={112},
  publisher={American Mathematical Society},
  place={Providence, RI},
  date={2004}
}

\bib{MR3937332}{article}{
   author={Bradford, Henry},
   author={Thom, Andreas},
   title={Short laws for finite groups and residual finiteness growth},
   journal={Trans. Amer. Math. Soc.},
   volume={371},
   date={2019},
   number={9},
   pages={6447--6462},
}

\bib{BradfordThom2024}{article}{
  author={Bradford, Henry},
  author={Thom, Andreas},
  title={Short laws for finite groups of {L}ie type},
  journal={J. Eur. Math. Soc.},
  date={2024},
  status={published online first},
  eprint={arXiv:1811.05401},
}

\bib{Burnside1902}{article}{
  author={Burnside, William},
  title={On an unsettled question in the theory of discontinuous groups},
  journal={Quart. J. Pure Appl. Math.},
  volume={33},
  date={1902},
  pages={230--238},
}

\bib{MR2395788}{article}{
   author={de Cornulier, Yves},
   author={Mann, Avinoam},
   title={Some residually finite groups satisfying laws},
   conference={
      title={Geometric group theory},
   },
   book={
      series={Trends Math.},
      publisher={Birkh\"{a}user, Basel},
   },
   date={2007},
   pages={45--50},
}

\bib{MR4508338}{article}{
   author={Fumagalli, Francesco},
   author={Leinen, Felix},
   author={Puglisi, Orazio},
   title={An upper bound for the nonsolvable length of a finite group in
   terms of its shortest law},
   journal={Proc. Lond. Math. Soc. (3)},
   volume={125},
   date={2022},
   number={5},
   pages={1066--1082},
}

\bib{Golod1969}{article}{
  author={Golod, Evgeny S.},
  title={Some problems of Burnside type},
  journal={Amer. Math. Soc. Transl. Ser.~2},
  volume={84},
  date={1969},
  pages={83--88}
}

\bib{GLS-I}{book}{
  author={Gorenstein, Daniel},
  author={Lyons, Richard},
  author={Solomon, Ronald},
  title={The classification of the finite simple groups},
  series={Mathematical Surveys and Monographs},
  volume={40},
  number={1–10},
  publisher={American Mathematical Society},
  place={Providence, RI},
  date={1994–2023},
  note={With Inna Capdeboscq as coauthor for Nos.\ 9–10}
}

\bib{Hall1958}{article}{
  author={Hall, Marshall, Jr.},
  title={Solution of the Burnside problem for exponent $6$},
  journal={Illinois J. Math.},
  volume={2},
  date={1958},
  pages={764--786},
}

\bib{HallHigman1956}{article}{
  author={Hall, Philip},
  author={Higman, Graham},
  title={On the $p$-length of $p$-soluble groups and reduction theorems for Burnside's problem},
  journal={Proc. London Math. Soc. (3)},
  volume={6},
  date={1956},
  pages={1--42},
}

\bib{Ivanov1994}{article}{
  author={Ivanov, Sergei V.},
  title={The free Burnside groups of sufficiently large exponents},
  journal={Int. J. Algebra Comput.},
  volume={4},
  number={1--2},
  date={1994},
  pages={1--308},
}

\bib{MR1681530}{article}{
   author={Jarden, Moshe},
   author={Lubotzky, Alexander},
   title={Random normal subgroups of free profinite groups},
   journal={J. Group Theory},
   volume={2},
   date={1999},
   number={2},
   pages={213--224},
}

\bib{MR344342}{article}{
   author={Jones, Gareth A.},
   title={Varieties and simple groups},
   journal={J. Austral. Math. Soc.},
   volume={17},
   date={1974},
   pages={163--173},
}

\bib{Juhasz1990}{article}{
  author={Juhász, Arye},
  title={Engel groups I},
  journal={Israel J. Math.},
  volume={69},
  date={1990},
  pages={1--24}
}

\bib{Kostrikin1959}{article}{
  author={Kostrikin, Alexei I.},
  title={The Burnside problem},
  journal={Izv. Akad. Nauk SSSR Ser. Mat.},
  volume={23},
  date={1959},
  pages={3--34},
}

\bib{kozmathom2016divisibility}{article}{
author={Kozma, Gady},
author={Thom, Andreas},
title={Divisibility and laws in finite simple groups},
journal={Mathematische Annalen},
volume={364},
date={2016},
number={1-2},
pages={79--95},
}

\bib{LeviVdW1933}{article}{
  author={Levi, Friedrich},
  author={van der Waerden, Bartel Leendert},
  title={\"Uber eine besondere Klasse von Gruppen},
  journal={Abh. Math. Sem. Univ. Hamburg},
  volume={9},
  date={1933},
  pages={154--158},
}

\bib{Lysenok1996}{article}{
  author={Lys\"enok, Igor G.},
  title={Infinite Burnside groups of even exponent},
  journal={Izv. Ross. Akad. Nauk Ser. Mat.},
  volume={60},
  number={3},
  date={1996},
  pages={3--224},
  note={English transl.: \emph{Izv. Math.} 60 (1996), 485--517},
}

\bib{BHNeumann1937}{article}{
  author={Neumann, Bernhard H.},
  title={Identical relations in groups. I},
  journal={Mathematische Annalen},
  volume={114},
  date={1937},
  pages={506–525}
}

\bib{MR3557468}{article}{
   author={Nikolov, Nikolay},
   title={Verbal width in anabelian groups},
   journal={Israel J. Math.},
   volume={216},
   date={2016},
   number={2},
   pages={847--876},
}

\bib{NovikovAdyan1968}{article}{
  author={Novikov, Pyotr S.},
  author={Adyan, Sergei I.},
  title={Infinite periodic groups I, II, III},
  journal={Izv. Akad. Nauk SSSR Ser. Mat.},
  volume={32},
  date={1968},
}

\bib{Sanov1940}{article}{
  author={Sanov, Ivan N.},
  title={Solution of Burnside's problem for exponent $4$},
  journal={Uchen. Zap. Leningrad Gos. Univ. Ser. Mat.},
  volume={10},
  date={1940},
  language={Russian},
}

\bib{MR1421888}{article}{
   author={Vaughan-Lee, Michael},
   author={Zel'manov, Efim I.},
   title={Upper bounds in the restricted Burnside problem. II},
   journal={Internat. J. Algebra Comput.},
   volume={6},
   date={1996},
   number={6},
   pages={735--744},
}

\bib{MR1717419}{article}{
   author={Vaughan-Lee, Michael},
   author={Zel'manov, Efim I.},
   title={Bounds in the restricted Burnside problem},
   note={Group theory},
   journal={J. Austral. Math. Soc. Ser. A},
   volume={67},
   date={1999},
   number={2},
   pages={261--271},
}

\bib{Zelmanov1991odd}{article}{
  author={Zel'manov, Efim I.},
  title={Solution of the restricted Burnside problem for groups of odd exponent},
  journal={Math. USSR-Izvestiya},
  volume={36},
  number={1},
  date={1991},
  pages={41--60},
}

\bib{Zelmanov1992two}{article}{
  author={Zel'manov, Efim I.},
  title={A solution of the restricted Burnside problem for $2$-groups},
  journal={Math. USSR-Sbornik},
  volume={72},
  number={2},
  date={1992},
  pages={543--565},
}

\end{biblist}
\end{bibdiv}

\end{document}